\documentclass[sigconf]{acmart}
\AtBeginDocument{%
  }

\copyrightyear{2026}
\acmYear{2026}
\setcopyright{usgovmixed}
\acmConference[KDD '26]{Proceedings of the 32nd ACM SIGKDD Conference on Knowledge Discovery and Data Mining V.2}{August 09--13, 2026}{Jeju Island, Republic of Korea}
\acmBooktitle{Proceedings of the 32nd ACM SIGKDD Conference on Knowledge Discovery and Data Mining V.2 (KDD '26), August 09--13, 2026, Jeju Island, Republic of Korea}
\acmDOI{10.1145/3770855.3818863}
\acmISBN{979-8-4007-2259-2/2026/08}
\usepackage{balance}
\usepackage{url}
\usepackage{multirow}

\begin{document}

\title{Graph Neural Multilevel Preconditioners for Iterative Solvers}

\author{Zechen Zhang}
\affiliation{%
    \institution{University of Minnesota}
    \city{Minneapolis}
    \state{MN}
    \country{USA}
}
\email{zhan5260@umn.edu}

\author{Rui Peng Li}
\affiliation{%
    \institution{CASA, Lawrence Livermore National Laboratory}
    \city{Livermore}
    \state{CA}
    \country{USA}
}
\email{li50@llnl.gov}

\author{Yousef Saad}
\affiliation{%
    \institution{University of Minnesota}
    \city{Minneapolis}
    \state{MN}
    \country{USA}
}
\email{saad@umn.edu}

\renewcommand{\shortauthors}{Zechen Zhang, Rui Peng Li, \& Yousef Saad}

\begin{abstract}

Solving large, sparse linear systems is a core task in scientific computing, and efficient iterative solvers rely critically on effective and robust preconditioning. While classical methods such as algebraic multigrid (AMG) are highly scalable, their robustness can degrade on indefinite or nonsymmetric systems where heuristics originally developed for elliptic PDEs are less reliable. Recently, Graph Neural Networks (GNNs) have emerged as data-driven preconditioners; yet, the practical impact of imposing an AMG-style hierarchy remains underexplored for general sparse matrices. In this work, we propose a Graph Neural Multilevel Preconditioner (GMP) that adopts an AMG hierarchy as a structural prior and learns smoothing, restriction, and interpolation operators in a unified framework. Our method targets general sparse systems and is instantiated as a drop-in preconditioner for standard Krylov solvers. On a benchmark of over 800 sparse matrices, we compare against classical AMG, single-level ILUT, and state-of-the-art GNN preconditioners, and characterize the regimes where multilevel graph neural preconditioning improves convergence or, conversely, introduces overhead relative to strong single-level baselines. These results highlight both the promise and the limitations of enforcing AMG-style multilevel structure in learned preconditioners for large-scale scientific simulations.

\end{abstract}

\begin{CCSXML}
<ccs2012>
   <concept>
       <concept_id>10002950.10003714.10003715</concept_id>
       <concept_desc>Mathematics of computing~Numerical analysis</concept_desc>
       <concept_significance>500</concept_significance>
       </concept>
   <concept>
       <concept_id>10002950.10003714.10003715.10003722</concept_id>
       <concept_desc>Mathematics of computing~Interpolation</concept_desc>
       <concept_significance>500</concept_significance>
       </concept>
   <concept>
       <concept_id>10010147.10010257.10010293.10010294</concept_id>
       <concept_desc>Computing methodologies~Neural networks</concept_desc>
       <concept_significance>300</concept_significance>
       </concept>
 </ccs2012>
\end{CCSXML}

\ccsdesc[500]{Mathematics of computing~Numerical analysis}
\ccsdesc[500]{Mathematics of computing~Interpolation}
\ccsdesc[300]{Computing methodologies~Neural networks}

\keywords{Preconditioning; Graph Neural Networks; Multilevel Methods; Iterative Solvers}

\maketitle

\section{Introduction}
The computational cost of many scientific simulations is dominated by the performance of iterative solvers for sparse linear systems. To achieve fast convergence in complex applications, these solvers require sophisticated preconditioning tailored to the underlying matrix structure. A broad range of classical preconditioners has been developed, including incomplete factorizations~\cite{saad1994ilut}, sparse approximate inverses~\cite{chow1998approximate}, and algebraic multigrid~\cite{ruge1987algebraic}. Yet, designing preconditioners that remain robust across general sparse matrices, especially ill-conditioned, nonsymmetric, or indefinite systems, remains challenging. 

Recent work has explored data-driven preconditioning strategies that learn an approximate inverse directly from the matrix graph. In particular, \citet{chen2025graph} showed that single-level graph neural preconditioners based on graph convolution can be competitive in both stability and performance on ill-conditioned, nonsymmetric systems. Despite this promise, standard GCN-style architectures are fundamentally local: each layer aggregates information over one-hop neighborhoods, and enlarging the receptive field requires stacking many layers~\cite{kipf2017semisupervised}. Deep message passing is known to suffer from optimization difficulties~\cite{chen2018fastgcn} and over-smoothing~\cite{li18deep, Oono2020Graph}, where node embeddings become progressively indistinguishable and lose discriminative information. For sparse linear solvers, this locality can be especially limiting because the error components that stall Krylov iterations are often global (low-energy) modes that can be poorly eliminated by shallow, local operators~\cite{saad2003iterative}. Classical multilevel methods address exactly this spectral separation by constructing a hierarchy of scales so that inexpensive relaxation reduces high-frequency (local) error, while coarse-grid correction targets low-frequency (global) components~\cite{stuben2001review_amg}. However, this success comes with a major caveat: multilevel preconditioners are intrinsically coupled systems in which several operators---smoothers, grid transfer operators, and coarse solvers---must work coherently. In classical AMG, these operators are typically produced by hand-crafted rules that were developed primarily for elliptic PDEs. On sparse matrices that deviate from that regime, these heuristics can fail to produce effective methods, and the resulting preconditioner may stagnate or even destabilize Krylov iterations.

Refining heuristic components in classical AMG has motivated the development of data-driven multilevel methods. Early work in this direction largely followed a component-replacement strategy: specific AMG components were learned (e.g., smoothers~\cite{huang2022learning}, interpolation sparsity patterns~\cite{taghibakhshi2021optimization}, or their weights~\cite{luz2020learning}), while the overall Galerkin framework and V-cycle structure were retained. In these pipelines, coarse correction is typically still performed by a standard direct solve on the coarsest level; thus, learning primarily targets setup quality rather than replacing AMG end-to-end. Recently, multilevel learning has been explored in PDE settings, where hierarchy serves as an architectural prior for operator learning. For example, AMGNet~\cite{yang2022amgnet} uses AMG-inspired coarsening to enable message passing across multiple mesh scales for flow-field prediction, and M2NO~\cite{li2026m2no} leverages predefined multi-wavelet spaces within a multigrid-like design. A more recent work, ROBIN~\cite{wurth2025diffusionbased}, combines an AMGNet-style hierarchy with diffusion-based refinement for physical simulations. However, these approaches typically assume access to geometric meshes and are trained with supervised solution data (or trajectory ground truth), distinguishing them from the \emph{matrix-only} preconditioning setting targeted in this work.

We summarize our contributions as follows:
\begin{itemize}
    \item \textbf{Residual-conditioned multilevel preconditioning for matrix-only problems.}
    We integrate an AMG-generated hierarchy into a learnable, end-to-end pipeline that maps the current residual to a correction direction and is compatible with flexible Krylov solvers.

    \item \textbf{Bipartite cross-attention for learned grid transfer beyond SPD.}
    Motivated by the local reconstruction view of BAMG~\cite{brandt2011bootstrap}, we parameterize interpolation and restriction on AMG-induced bipartite graphs using cross-attention, employing distinct parameter sets for the two directions to support Petrov--Galerkin coarse operators.

    \item \textbf{Large-scale evaluation on challenging non-SPD and nonsymmetric systems.}
    We evaluate on 867 SuiteSparse~\cite{SuiteSparse} non-SPD matrices (up to 100K unknowns and 2M nonzeros) and compare against classical preconditioners and a strong single-level learned baseline, analyzing both convergence behavior and practical failure modes.
\end{itemize}

\section{Background}
\subsection{Algebraic Multigrid Methods}
Classical algebraic multigrid (AMG) achieves rapid convergence through two complementary components:
(i) relaxation, which efficiently attenuates error components that are oscillatory (``high energy'') with respect to $A$, and
(ii) coarse-grid correction, which removes the remaining algebraically smooth (``low energy'') error.
For symmetric positive definite (SPD) operators, the algebraic analogue of geometric smoothness is often characterized by a small Rayleigh quotient ${(e, e)_A}/{(e, e)}$.

The role of the coarse level is to represent the remaining low-energy components in a smaller subspace so they can be removed efficiently. Let $P\in\mathbb{R}^{n\times n_c}$ denote the interpolation operator, whose range $\mathrm{range}(P)$ defines the coarse space.
A standard requirement is that algebraically smooth errors be accurately approximated in $\mathrm{range}(P)$, which is formalized through an approximation property of the form
\begin{equation}
\label{eq:approx_prop}
\min_{e_c\in\mathbb{R}^{n_c}}\ \|e - P e_c\|_{M}^{2}\ \le\ C_P\, (e, e)_A,
\end{equation}
where $\|\cdot\|_M$ denotes a relaxation norm associated with the smoother, and the constant $C_P$ is mesh-independent. This inequality provides the two-grid link: the same quantity $(e,e)_A$ that tends to be small after relaxation controls how well the remaining error can be represented on the coarse level.

Given a restriction operator $R$, the coarse-grid correction is obtained by enforcing the Petrov--Galerkin condition
$e-Pe_c \perp \mathrm{range}(AR^\top)$, which leads to $RAPe_c = RAe$.
The operator $A_c = RAP$ is referred to as the Petrov--Galerkin coarse-grid operator, and the corresponding coarse-grid correction operator is
\begin{equation} \label{eq:cgc}
C = I - P(R A P)^{-1} R A.
\end{equation}
Together with pre- and post-relaxation with matrix $M$, the two-grid error propagation from one iteration to the next is given by
\begin{equation}
\label{eq:TG}
e'=(I-M^{-\top}A)C(I-M^{-1}A)e \equiv E_{TG}e.
\end{equation}
For an SPD matrix $A$, a typical choice is $R=P^\top$, in which case $C$ in Eq.~\eqref{eq:cgc} is an $A$-orthogonal projector (equivalently, the energy-minimizing best-approximation update) onto the complement of $\mathrm{range}(P)$; consequently, the correction is energy non-increasing:
$\|C e\|_A \le \|e\|_A$.

Equation~\eqref{eq:TG} shows that two-grid performance is determined by the triple $(M,P,R)$.
We next revisit how classical AMG constructs $P$ so that $\mathrm{range}(P)$ captures the smooth error remaining after relaxation. In practice, $P$ is constructed to be sparse by first selecting for each fine node $i$ a small coarse stencil $C_i$ (typically guided by strength-of-connection heuristics~\cite{stuben2001review_amg}), and subsequently determining the interpolation weights on this fixed pattern.

\subsubsection{Interpolation as Local Reconstruction}
\label{sec:BAMG}
After partitioning the vertex set $\mathcal{V}$ into two disjoint sets: fine nodes $\mathcal{V}_F$ and coarse nodes $\mathcal{V}_C$, the interpolation operator $P$ specifies how a coarse-grid vector is reconstructed on the fine nodes. For any fine node $i\in \mathcal{V}_F$, the interpolated value $(Pv)_i$ is defined as a weighted linear combination of the values at its coarse neighbors:
\begin{equation}
\label{eq:amg_interp_def}
(Pv)_i \;=\; \sum_{j \in C_i} w_{ij} \, v_j, \quad \text{for } i \in \mathcal{V}_F,
\end{equation}
where $C_i\subseteq \mathcal{V}_C$ denotes the interpolatory set of $i$, and $w_{ij}$ are the interpolation weights.
A central objective in computing $w_{ij}$ is to ensure that the local reconstruction in Eq.~\eqref{eq:amg_interp_def} is accurate for algebraically smooth modes, i.e., vectors satisfying $Av\approx 0$.

BAMG achieves this objective by utilizing a set of (smoothed) test vectors $\{v^{(m)}\}_{m=1}^{q}$,
to determine $w_{ij}$ through a weighted least-squares (L-S) minimization, where $r^{(m)} = A v^{(m)}$:
\begin{equation}
\label{eq:rbamg_ls}
\min_{\{w_{ij}\}}
\sum_{m=1}^q \omega_{m}
\left(
{v_i^{(m)} - \frac{r_i^{(m)}}{a_{ii}}} \;-\; \sum_{j\in C_i} w_{ij} v_j^{(m)}
\right)^2.
\end{equation} 

The term $r_i^{(m)}/a_{ii}$, introduced by the residual-based BAMG, corresponds to implicitly enforcing the $i$-th equation by one step of point-wise Jacobi relaxation.

BAMG can further refine interpolation adaptively by improving the test vectors using the current hierarchy and recomputing the local L-S problems. In practice, however, computing $P$ via Eq.~\eqref{eq:rbamg_ls} presents several challenges. First, a well-posed local L-S problem requires that the local test vectors (restricted to $C_i$) must form a basis for the $|C_i|$-dimensional subspace, which in turn requires $q \ge |C_i|$. This condition can be difficult to satisfy when $|C_i|$ varies widely across fine points. Second, for nonsymmetric or indefinite systems, the local algebraically smooth space may not be well-represented by a small local stencil $C_i$, potentially leading to nearly rank-deficient or ill-conditioned L-S systems and unstable weights. Finally, the quality of interpolation depends strongly on the choice of $C_i$ as well as the number and quality of the test vectors. Generating and adaptively updating these vectors can incur significant setup costs.

\subsubsection{Nonsymmetric and Indefinite Operators}
While constructing a stable interpolation operator $P$ is practically challenging even in the SPD setting, the difficulty is amplified for
nonsymmetric and indefinite systems.
These systems introduce a distinct left smooth space, so multilevel stability depends not only on $P$ but also on the construction of an appropriate restriction operator $R$.
In this regime, the clean SPD variational picture breaks down.
Since the algebraically smooth components associated with $A$ (right modes) and with $A^\top$ (left modes) can differ substantially, the choice $R=P^\top$ is no longer well-motivated.
The resulting coarse correction becomes an \emph{oblique} projection, which may be non-contractive in standard norms, making stability of multilevel correction a central challenge in nonsymmetric AMG~\cite{manteuffel2019convergence}.
Strong theoretical results on convergence of AMG methods for non-SPD matrices are difficult to establish.
Two-grid convergence for aggregation-based nonsymmetric AMG formulated in the $\sqrt{A^\top A}$-norm was introduced in~\cite{doi:10.1137/080727336}.
An optimal AMG theory for non-SPD matrices where the columns of $P$ and $R$ are right and left generalized eigenvectors can be found in~\cite{doi:10.1137/24M1679288}.
As a practical alternative to ideal restriction in the nonsymmetric setting, a local approximate ideal restriction strategy was proposed in $\ell$AIR~\cite{manteuffel2018nonsymmetric}.

These challenges motivate us to avoid explicitly enforcing \(R=P^\top\) 
or relying on problem-specific heuristics for transfer operators. Instead, we parameterize restriction and interpolation with separate, learnable models and optimize them jointly through a residual-based objective, ensuring that multilevel correction remains stable beyond the SPD setting. To lay the foundation for our learned operators, we first review the message passing neural network (MPNN) architecture used throughout this work.

\subsection{Message Passing Neural Networks (MPNNs)}
\label{sec:prelim_mpnn}

For an input graph $\mathcal{G}=(\mathcal{V},\mathcal{E})$, let $\mathbf{h}_i^k\in\mathbb{R}^{d_h}$ and $\boldsymbol{\varepsilon}_{ij}^k\in\mathbb{R}^{d_\varepsilon}$ denote 
the node and edge features at MPNN layer $k$. The update rules for a single message-passing layer are defined as:
\begin{align}
\boldsymbol{\varepsilon}_{ij}^{k+1} &= \phi_\varepsilon\!\left(\boldsymbol{\varepsilon}_{ij}^{k},\, \mathbf{h}_i^{k},\, \mathbf{h}_j^{k}\right),
\label{eq:mpnn_edge}\\
m_i^{k+1} &= \rho\!\left(\left\{\, \boldsymbol{\varepsilon}_{ij}^{k+1} : j\in\mathcal{N}(i)\,\right\}\right),
\label{eq:mpnn_agg}\\
\mathbf{h}_i^{k+1} &= \phi_v\!\left(\mathbf{h}_i^{k},\, m_i^{k+1}\right),
\label{eq:mpnn_node}
\end{align}
where $\phi_\varepsilon$ and $\phi_v$ are learnable multilayer perceptrons (MLPs) that update the edge and node features, respectively; $\rho$ is a permutation-invariant aggregator (e.g., sum, mean, or max); and $\mathcal{N}(i)$ denotes the set of in-neighbors of node $i$. Stacking $K$ such layers forms a depth-$K$ MPNN at each hierarchy level. In the following sections, we denote this $K$-layer update as
\begin{equation}
(\mathbf{h}^{K},\,\boldsymbol{\varepsilon}^{K})
=\mathrm{MPNN}\big(\mathbf{h}^{0},\, \boldsymbol{\varepsilon}^{0};\,A\big).
\end{equation}

\section{Residual-Conditioned Multilevel Graph Neural Preconditioner}

We now introduce the architecture of GMP, which mirrors classical AMG hierarchies. 

We define GMP as a learnable operator that maps an input residual $r$ to a correction $e$ for the residual equation $Ae=r$:
\begin{equation}
\label{eq:z}
e = \mathrm{GMP}_\theta(r; A, \mathcal{H}(A)),
\end{equation}
where $\mathcal{H}(A)$ denotes a 
multilevel hierarchy that serves as a structural prior. 

When used within an iterative solver, GMP acts as a nonlinear preconditioner, adaptively producing a correction direction $e$ based on the current system state $r$.
Fig.~\ref{fig:gmp_architecture} illustrates a simplified two-level GMP architecture. In the remainder of this section, we provide a detailed description of each component.

\begin{figure}[t!]
    \centering
    \includegraphics[width=0.46\textwidth]{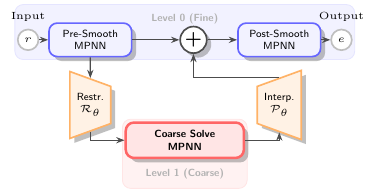}
    \caption{Two-level GMP V-cycle with learned operators. Given the fine-level residual $r$, a pre-smoothing MPNN generates an initial fine-grid correction. The residual is then mapped to the coarse level via a learned restriction $\mathcal{R}_{\theta}$. A coarse-solve MPNN computes a coarse correction, which is interpolated back to the fine level via a learned interpolation $\mathcal{P}_{\theta}$ and added to the initial correction (skip connection). Finally, a post-smoothing MPNN produces the final update $e$.}
    \Description{Illustration diagram of a two-level GMP V-cycle. On the fine level (Level 0), an input residual r enters a Pre-Smooth MPNN, and a Post-Smooth MPNN produces the output correction e, with a summation node combining the smoothed and interpolated corrections via a skip connection. A learned restriction operator R-theta maps information down to the coarse level (Level 1), where a Coarse Solve MPNN computes a coarse correction that is sent back up through a learned interpolation operator P-theta.}
    \label{fig:gmp_architecture}
\end{figure}

\subsection{Structural Prior}
\label{sec:structural_prior}
Given the input sparse matrix $A^{(0)} \in \mathbb{R}^{n_0 \times n_0}$, we construct a multilevel hierarchy that serves as the topological skeleton for our learned preconditioner. 

We utilize classical AMG routines to generate a sequence of progressively coarser operators $\mathcal{H} = \{A^{(\ell)}\}_{\ell=0}^{L}$ and the associated grid transfer operators $\{R^{(\ell)}, P^{(\ell)}\}$, where each coarse operator is defined via the Petrov--Galerkin condition $A^{(\ell+1)} = R^{(\ell)} A^{(\ell)} P^{(\ell)}$. We leverage this hierarchy to define both the graph topology and the message-passing features. The sparsity patterns of $A^{(\ell)}$ and the transfer operators determine the intra-level and inter-level connectivity, respectively. 

To account for the multilevel hierarchy and the multiple layers in MPNN, we employ a dual indexing notation $(\ell, k)$ in the following sections. The index $\ell$ denotes the AMG level, while $k\in\{0,\dots, K\}$ denotes the message-passing layer within level $\ell$. 

\subsection{Relaxation by Intra-Level Message Passing}
\label{sec:message_passing}
At each hierarchy level $\ell$, relaxation is performed on $\mathcal{G}_{A^{(\ell)}}$,
the graph induced by the sparsity pattern of 
the operator $A^{(\ell)}$. 
On $\mathcal{G}_{A^{(\ell)}}$, we treat the node feature $\mathbf{h}^{(\ell,k)}$ and edge attributes $\boldsymbol{\varepsilon}^{(\ell,k)}$ as the evolving algebraic state, which is updated by the MPNN described in Section~\ref{sec:prelim_mpnn}.

The node features are initialized as follows: at the finest level, we set $\mathbf{h}^{(0,0)} = r^{(0)}$, the fine-level residual; at coarser levels ($\ell > 0$), $\mathbf{h}^{(\ell,0)} = 0$, following the configuration in AMGNet~\cite{yang2022amgnet}. Edge features are initialized from the operator entries,
i.e.,
$\boldsymbol{\varepsilon}_{ij}^{(\ell,0)}=a_{ij}^{(\ell)}$. The relaxation is performed by $K$ message-passing layers:
\begin{equation}
(\mathbf{h}^{(\ell,K)},\,\boldsymbol{\varepsilon}^{(\ell,K)})
=\mathrm{MPNN}\!\left(\mathbf{h}^{(\ell,0)},\,\boldsymbol{\varepsilon}^{(\ell,0)};\,A^{(\ell)}\right),
\label{eq:level_relax_mpnn}
\end{equation}
where each layer follows the edge update, aggregation, and node update in
Eqs.~\eqref{eq:mpnn_edge}--\eqref{eq:mpnn_node}, and
the output $\mathbf{h}^{(\ell,K)}$ is the relaxed latent state.
This output serves as the input for the subsequent multilevel operator.
We do not decode $\mathbf{h}^{(\ell,\, K)}$ into a scalar correction, but instead maintain it in the high-dimensional feature space.

This design provides an alternative to an 
explicit residual update of the form $r 
\leftarrow r - A^{(\ell)} e$. 
In classical relaxation schemes, such a subtraction arises because 
the residual is
explicitly maintained as a separate 
vector throughout the iterations. 
In contrast, in our formulation, the iteration state is the node feature $\mathbf{h}$ itself, and the MPNN update has direct access to the local action of $A^{(\ell)}$
on the current state through message passing. Since the edge features $\boldsymbol{\varepsilon}_{ij}$ are initialized from matrix entries $a_{ij}^{(\ell)}$, the aggregation in Eq.~\eqref{eq:mpnn_agg} can represent neighborhood summations analogous to
sparse matrix-vector products. 
More specifically, 
the aggregated message at node $i$ can be
written as
\begin{equation}
m_i^{(\ell,\,k+1)} \;=\; \sum_{j\in\mathcal{N}_\ell(i)} a_{ij}^{(\ell)}\,\phi_{\varepsilon}\!\left(\mathbf{h}_j^{(\ell,\,k)}\right),
\label{eq:mpnn_Ah_signal}
\end{equation}
where $\phi_\varepsilon$ is a learnable map
induced by the edge attribute update in Eq.~\eqref{eq:mpnn_edge}.
The node update in Eq.~\eqref{eq:mpnn_node} then combines $\mathbf{h}_i^{(\ell,k)}$ and the $A^{(\ell)}$-conditioned signal $m_i^{(\ell,k+1)}$ to produce $\mathbf{h}_i^{(\ell,k+1)}$. 
In this way, residual-style corrections 
can be implemented implicitly within the
feature update. 
Consequently, the operation 
``$r - A^{(\ell)}(\cdot)$''  
is absorbed into the node update map $\phi_v$,
rather than being performed as a separate sparse matrix-vector product 
followed by a subtraction.

\subsection{Grid Transfer via Bipartite Cross-Attention}
\label{sec:bipartite_att}

In this section, we discuss the construction of the learnable grid transfer operators:
the interpolation operator 
$\mathcal{P}_\theta:\mathcal{V}_C\to \mathcal{V}_F$ and the
restriction operator
$\mathcal{R}_\theta:\mathcal{V}_F\to \mathcal{V}_C$. 
We further establish the connection between the BAMG  scheme introduced in Sec.~\ref{sec:BAMG} and graph attention mechanism on bipartite graphs.

\subsubsection{Graph Cross-Attention as Dynamic Interpolation}
Interpreting AMG interpolation as a form of neighborhood aggregation suggests that the weights encode the relative importance of neighboring coarse nodes when reconstructing smooth modes at fine points. 
While BAMG computes these weights through explicit local L-S solves, the underlying objective, i.e., accurate local reconstruction of smooth error using neighboring information, can be expressed more generally. Graph attention mechanisms~\cite{gat, shi2020masked} offer a flexible framework for parameterizing such local aggregation rules. In our framework, we interpret the attention mechanism not merely as feature aggregation, but as a \textit{learnable, dynamic solver} for the interpolation weights.

To formalize this, let $\mathcal{G}_{P}^{\mathcal{B}} = (\mathcal{V}, \mathcal{E})$ 
denote the graph induced by 
the sparsity pattern of the interpolation operator $P \in \mathbb{R}^{n \times n_c}$. 
In this context, the graph is naturally \emph{bipartite}, partitioning the vertex set $\mathcal{V}$ into two disjoint sets: $\mathcal{V}_C$ (coarse nodes) and $\mathcal{V}_F$ (fine nodes). The edge set $\mathcal{E}$ is restricted to pairs $(i,j)$ where $i\in\mathcal{V}_F$ and $j\in\mathcal{V}_C$, such that the neighborhood $\mathcal{N}(i)$ of a fine node $i$ corresponds exactly to its immediate interpolatory set $C_i$.

To ensure end-to-end training across all hierarchy components, we use the output of the intra-level message passing at level $\ell$ to initialize the $\ell\to\ell-1$ bipartite attention operator $\mathcal{P}_{\theta}$. Specifically, we set the node and edge features as follows:
\begin{align}
\mathbf{h}_{i}^{(\ell-1,\,0)} &= \mathbf{h}_{i}^{(\ell, K)}, \quad i \in \mathcal{V}_C \\
\mathbf{h}_{i}^{(\ell-1,\,0)} &= (P^{(\ell-1)}\;\mathbf{h}^{(\ell, K)})_i, \quad i \in \mathcal{V}_F\\
\boldsymbol{\varepsilon}_{ij}^{(\ell-1,\,K)} &= P_{ij}^{(\ell-1)},
\end{align}
where $\mathbf{h}_{i}^{(\ell, K)}$ is the output of the relaxation MPNN in Eq.~\eqref{eq:level_relax_mpnn}, $P^{(\ell-1)}$
is the interpolation operator generated by the classical AMG algorithm described in Sec.~\ref{sec:structural_prior},
and the edge features are initialized by the entries in $P$. For the remainder of this subsection, we omit the level superscripts on $\mathbf{h}$ and $\boldsymbol{\varepsilon}$ to simplify notation.

These embeddings, having aggregated local information via the operator $A^{(\ell)}$, effectively serve as learned surrogates for the algebraically smooth error modes. We denote by $W_{\text{self}}, W_{\text{val}}, W_{\text{edge}}$ 
the neural weight matrices used to transform the fine node's own features, project the features of its neighboring coarse nodes,
and encode edge attributes, respectively. A bipartite graph attention layer models the interpolation operator
$P$ by updating the fine node features $\mathbf{h}_i$ via an aggregation of coarse-grid corrections 
$\mathbf{h}_j$:
\begin{equation}
\label{eq:attn_generic_update}
\mathbf{h}_i \;=\; W_{\text{self}} \mathbf{h}_i \;+\; \sum_{j\in C_i} \alpha_{ij}\, \bigl(W_{\text{val}} \mathbf{h}_j + W_{\text{edge}} \boldsymbol{\varepsilon}_{ij}\bigr), \quad \text{for } i \in \mathcal{V}_F.
\end{equation}

Comparing Eq.~\eqref{eq:attn_generic_update} with classical interpolation in Eq.~\eqref{eq:amg_interp_def}, the attention coefficients $\alpha_{ij}$ play precisely the role of the interpolation weights $w_{ij}$. However, graph attention predicts these weights dynamically using a parameterized scoring function. 

\begin{figure}[t]
    \centering
    \includegraphics[width=0.36\textwidth]{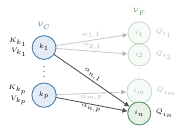}
    \caption{Interpolation via Coarse-to-Fine Attention. Active nodes (solid) illustrate the aggregation process for a fine query node $i$, gathering information from its coarse neighbor.}
    \Description{Diagram of coarse-to-fine attention on a bipartite graph between fine nodes and coarse nodes. A highlighted fine query node i aggregates information from its connected coarse neighbor node, illustrating how interpolation weights are predicted by a dot-product attention mechanism in which queries come from fine nodes and keys and values come from coarse nodes and their connecting edges.}
    \label{fig:bipartite_mechanism}
\end{figure}

Specifically, we employ a dot-product attention mechanism similar to graph transformers~\cite{shi2020masked}, where the roles of Query, Key, and Value are strictly partitioned according to the grid hierarchy, as illustrated in Fig.~\ref{fig:bipartite_mechanism}:
\begin{itemize}
    \item \textbf{Queries ($\mathbf{q}_i$):} Derived from the \textit{target fine nodes} ($i \in \mathcal{V}_F$), representing the local error state requiring correction.
    \item \textbf{Keys ($\mathbf{k}_{ij}$) and Values ($\mathbf{v}_{ij}$):} Derived from the \textit{source coarse nodes} ($j \in \mathcal{V}_C$) and the connecting edges, providing the coarse-grid information.
\end{itemize}
We employ the projection matrices $W_Q, W_K \in \mathbb{R}^{d \times d}$ and $W_E \in \mathbb{R}^{d \times d_\varepsilon}$ to map these features into the attention space, and compute the attention score as:
\begin{equation}
\label{eq:trans_alpha}
\alpha_{ij}
\;=\;
\text{softmax}_{j\in C_i}
\left(
\frac{(W_Q \mathbf{h}_i)^\top (W_K \mathbf{h}_j + W_E \boldsymbol{\varepsilon}_{ij})}{\sqrt{d}}
\right).
\end{equation}
In this formulation, the attention score measures the compatibility between the fine-grid error mode (Query) and  coarse-grid information (Key), determining an adaptive interpolation weight $\alpha_{ij}$ that updates the edge attribute $\boldsymbol{\varepsilon}_{ij}$. The weights derived from the attention mechanism inherently guarantee that the resulting interpolation operator is stable in the $\ell_\infty$-norm and preserves constant vectors; see Appendix~\ref{sec:theorem_a1} for details.

\subsubsection{Graph Cross-Attention as Restriction}
The restriction operator $\mathcal{R}_\theta$, which maps from the fine grid to the coarse grid, is realized by reversing the direction of the bipartite graph attention mechanism. In this setting, the roles are inverted when computing the node feature update: the coarse nodes $\mathcal{V}_C$ act as Queries, while the fine nodes $\mathcal{V}_F$ serve as Keys and Values. 

By employing a distinct set of learnable parameters for this reverse direction, this formulation becomes compatible with nonsymmetric 
Petrov--Galerkin coarse-grid operators constructed
via $A_c = \mathcal{R}_\theta A \mathcal{P}_\theta$ where $\mathcal{R}_\theta \neq \mathcal{P}_\theta^\top$. This added flexibility is 
particularly
crucial for nonsymmetric systems, allowing the model to learn restriction weights that are specifically optimized for aggregating residuals, distinct from the interpolation weights used for correcting errors.

\subsubsection{Connections between BAMG and Graph Cross-Attention} 
We conclude by examining the structural correspondence between AMG and our proposed learnable transfer operator. 
In BAMG, the ``data'' used to determine interpolation weights are the smooth test vectors $v^{(m)}$, and the weights are derived by solving local L-S problems. The graph attention framework mirrors this logic but generalizes the representation: the learned node embeddings $\mathbf{h}_i$ serve as high-dimensional surrogates for the algebraically smooth error modes. Generated through stacked message-passing layers, these embeddings aggregate features from an expanded receptive field, capturing structural dependencies from multi-hop neighborhoods rather than relying solely on direct adjacencies. From this perspective, the dot product $(W_Q \mathbf{h}_i)^{\top} (W_K \mathbf{h}_j)$ in Eq.~\eqref{eq:trans_alpha} serves as a learned measure of alignment between nodes $i$ and $j$, analogous to the correlation terms ($v_i^{(m)},v_j^{(m)}$) that appear in the normal equations of the local L-S systems of BAMG. 
By replacing the explicit solution of these L-S systems with a differentiable attention mechanism, the proposed approach enables the adaptive generation of interpolation weights. This potentially improves robustness in regimes where the local L-S problems become ill-conditioned, such as for nonsymmetric or indefinite matrices.

\subsection{Coarse-Level Correction by Message Passing}

We next describe a simplified two-level architecture by specifying the node feature updates at every stage and discussing the coarse solve. A multilevel architecture and the training scheme can be extended naturally by applying the two-level approach
recursively.

Let \(\mathbf{h}_{\mathrm{pre}}^{(0,K)}\) denote the output of the pre-smoothing MPNN applied to 
the fine-level graph associated with \(A^{(0)}\); see Eq.~\eqref{eq:level_relax_mpnn}.
The coarse-level node features are initialized by restricting the fine-level latent state using 
the learned restriction operator:
\begin{equation}
\mathbf{h}_c^{(1,0)} \;=\; \mathcal{R}_{\theta}\!\left(\mathbf{h}_\mathrm{pre}^{(0,K)}\right),
\label{eq:restrict_latent}
\end{equation}
where \(\mathcal{R}_{\theta}\) is parameterized via 
the bipartite cross-attention mechanism described in Section~\ref{sec:bipartite_att}.
For the coarse-solve MPNN, the coarse-level 
edge features are initialized from the 
entries of the
coarse-grid operator:
\begin{equation}
\boldsymbol{\varepsilon}_{ij}^{(1,0)} \;=\; a^{(1)}_{ij}, \qquad (i,j)\in\mathcal{E}^{(1)},
\label{eq:coarse_edge_init}
\end{equation}
where $a_{ij}^{(1)}$ is an entry of the coarse-grid operator $A^{(1)}$, the structural prior constructed following Sec.~\ref{sec:structural_prior}.

On the graph induced by the structure of
the coarse-grid operator
\(A^{(1)}\), we apply a dedicated coarse-solve MPNN with a parameter set distinct from that used in the fine-level smoother MPNNs:
\begin{equation}
\big(\mathbf{h}_c^{(1,K)},\,\boldsymbol{\varepsilon}_c^{(1,K)}\big)
\;=\;
\mathrm{MPNN}_{c}\!\left(\mathbf{h}_c^{(1,0)},\,\boldsymbol{\varepsilon}^{(1,0)};\,A^{(1)}\right).
\label{eq:coarse_mpnn}
\end{equation}

The resulting coarse-level correction is interpolated back to the fine level using the learned operator \(\mathcal{P}_{\theta}\) and combined additively with 
the pre-smoothed fine-level state:
\begin{equation}
{\mathbf{h'}}
\;=\;
\mathbf{h}^{(0,K)}_{\mathrm{pre}}
\;+\;
\mathcal{P}_{\theta}\!\left(\mathbf{h}_c^{(1,K)}\right).
\label{eq:additive_cgc}
\end{equation}
The post-smoothing MPNN takes \({\mathbf{h'}}\) as its input state on the fine level and produces the final correction \(e\); see Fig.~\ref{fig:gmp_architecture}.

\section{Experiments}
\label{sec:experiments}

We evaluate the proposed GMP as a drop-in preconditioner for Krylov subspace methods on 
sparse linear systems arising from a variety of applications.
Our experiments are designed to address the following questions:

\noindent\textbf{Q1}: Is a multilevel architecture 
beneficial for machine-learning-based preconditioning?
    
\noindent\textbf{Q2}: GNN-based preconditioners are known for their robustness~\cite{chen2025graph}; does
    introducing a 
    multilevel structure compromise this property?
    
\noindent\textbf{Q3}: How do learned preconditioners compare with classical methods in terms of both convergence and timing behavior?

\noindent\textbf{Q4}: How does GMP perform in challenging cases where classical AMG methods struggle to provide effective preconditioners?

\subsection{Experiment settings}
\subsubsection{Baseline Setup}

To evaluate our method across diverse applications,  
we select 867 square, real-valued, and non-SPD matrices in the size range of $1$K to $100$K with fewer than $2$M nonzeros from the SuiteSparse Matrix Collection~\cite{SuiteSparse}.
The chosen problems span over 50 application areas, including PDEs, economics, and graph problems.\footnote{Code available at \url{https://github.com/zzechenzhang/GMP}.}

\noindent\textbf{Baseline Methods.} 
We compare against the single-level graph neural preconditioner (GNP)~\cite{chen2025graph}, as well as the classical preconditioners, Jacobi, ILUT~\cite{saad1994ilut}, and AMG. 
The ILUT preconditioner is
provided by
\path{scipy.sparse.linalg.spilu} 
using default parameters. The AMG method is obtained 
from PyAMG~\cite{pyamg2023}, specifically using the \path{pyamg.blackbox.solver()} routine. The configuration of the blackbox solver is computed via \path{pyamg.blackbox.solver_configuration}. We also experimented with the Approximate Ideal Restriction (AIR) method~\cite{manteuffel2018nonsymmetric} (\path{pyamg.classical.air_solver}) but found it to be less robust than \path{blackbox} (consistent with the findings of~\citet{chen2025graph}). Therefore, we adopt the \path{blackbox} solver for the primary comparisons, while retaining AIR as a baseline for the case study in Section~\ref{sec:case_study} 
and in the supplementary results in Appendix \ref{sec:air_results}.
The GNP model is trained according to the exact hyperparameter configuration specified by~\citet{chen2025graph}. 

We pre-scale each test matrix $A$ using an upper bound on its spectral radius derived from the Gershgorin estimate:
\begin{equation*}
        \hat{A} = A/\gamma \quad \text{where} \quad \gamma = \min \left\{ \max_{i} \left\{ \sum_{j} |a_{ij}| \right\}, \max_{j} \left\{ \sum_{i} |a_{ij}| \right\} \right\}.
\end{equation*} 
We then apply the parameter-free scaling $s(\cdot) = \sqrt{n}/\|b\|_2$, and scale back by $s^{-1}$ to ensure the model remains scale-equivariant. 

All the preconditioners are used
with the flexible GMRES (FGMRES) method.
For all linear systems, the exact solution 
is assumed to be $x^* = \mathbf{1}$, and the
initial guess is set to $x_0 =\mathbf{0}$. 
The stopping criteria for FGMRES are 
a reduction of the relative residual
norm to \texttt{rtol}$=10^{-8}$ or reaching a maximum number of iterations \texttt{imax}=$100$.

\noindent\textbf{Compute Environment.} All code is implemented in Python with PyTorch and PyTorch-Geometric. All experiments are conducted on a machine equipped with a single NVIDIA Tesla A100 GPU with 40GB of memory.

\subsubsection{GMP configurations}
\label{sec:gmp_setup}
To support evaluation across more than 800 matrices, we adopt the following practical design choices.

\noindent\textbf{(1) Node-only message passing (i.e., no edge feature updates).}
We omit all edge update steps in the MPNN formulation (Eq.~\eqref{eq:mpnn_edge}) and train the pipeline end-to-end using \emph{only} node features.
This choice is due to computational considerations: applying an MLP to every edge can be a dominant cost when a matrix contains up to $\sim\!2$M nonzeros, making the large-scale evaluation impractical.

\noindent\textbf{(2) AMG initialization with robustness checks.}
We generate the initial AMG hierarchy using PyAMG by applying the sequence of:
\texttt{classical\_\allowbreak strength\_\allowbreak of\_\allowbreak connection}
\(\rightarrow\)
\texttt{pyamg.\allowbreak classical.\allowbreak split.\allowbreak RS} 
(the Ruge--St\"uben coarsening algorithm)
\(\rightarrow\)
\texttt{pyamg.\allowbreak class\allowbreak ical.\allowbreak interpolate.\allowbreak classical\_\allowbreak interpolation}.
The resulting interpolation operator $P$ 
is used to 
initialize the node features associated with 
the learned operators  
\(\mathcal{P}_\theta\) as discussed in Sec.~\ref{sec:bipartite_att}.
To ensure numerical stability, we check for invalid entries; 
if detected, they are replaced with 
random values, and each row is renormalized to sum to one. We initialize the learning-based restriction
operator
\(\mathcal{R}_\theta\) 
with \(R=P^\top\),
and allow the bipartite cross-attention module to learn distinct restriction and interpolation operators
\(\mathcal{R}_\theta\) and \(\mathcal{P}_\theta\).

\noindent\textbf{(3) Training protocol.}
For each training instance, we sample a target solution \(x\) and form the corresponding right-hand side \(b = Ax\). The GMP model is trained to output a correction \(e=\mathrm{GMP}_{\theta}(b)\), where at training time the input is the right-hand side \(b\) in place of the inference-time Krylov residual \(r\). We then minimize the \(\ell_1\)-norm of the residual
\begin{equation}
\label{eq:l1_loss}
\mathcal{L}(\theta)=\|A e-A x\|_1 \;=\; \|A e - b\|_1,
\end{equation}
which is a commonly used robust regression objective. The target solutions \(x\) are drawn from a mixture of the standard normal distribution \(\mathcal{N}(\mathbf{0}, \mathbf{I}_n)\) and the distribution \(\mathcal{N}(\mathbf{0}, \mathbf{\Sigma}_m^{\mathbf{x}})\), where the latter uses the Arnoldi process to capture the important eigen-subspace of \(A\) corresponding to the smallest eigenvalues.
We optimize with Adam~\cite{adam} for \(2000\) epochs using a learning rate of \(10^{-3}\).

\noindent\textbf{(4) GMP levels for training and testing.}
We focus primarily on two-level GMP in the experimental section 
and demonstrate its extension to the multilevel setting
on a subset of matrices; see Section~\ref{sec:multi-single} for details. 
There are two main reasons for emphasizing the two-level GMP. 
First, classical AMG employs multiple levels to avoid solving a large system
at the coarsest grid. On the other hand, GMP replaces the coarse-grid solve 
with a (multilayer) ResGCN, which significantly reduces the need for additional coarsening levels and makes a two-level hierarchy effective in practice.
Second, for the non-SPD problems considered, the standard convergence theory of AMG does not apply, and increasing levels can even degrade convergence.
The optimal hierarchy is highly problem-dependent, making it challenging to tune across 867 matrices in our test suite.

\noindent\textbf{(5) GMP hyperparameters.}
We do not tune hyperparameters. The 867 matrices constitute a collection of
general sparse linear systems that cannot be assumed to share a common distribution.
Consequently, one cannot assume a shared structure that a single model can exploit. We therefore adopt a \emph{per-matrix} training protocol rather than learning a shared model. 
Since tuning hyperparameters separately for each matrix is computationally prohibitive, we instead use
a fixed configuration across the entire dataset. 
When such a distributional assumption \emph{does} hold---e.g., when matrices arise from the same PDE family with varying parameters---cross-matrix training becomes not only feasible but beneficial.
In such settings,
the learned operators generalize to unseen problems and even unseen grid resolutions. 
We demonstrate this regime on a controlled Poisson/anisotropic-diffusion benchmark in Appendix~\ref{sec:cross_matrix}.

For the MPNNs in both the smoother and coarse-grid 
solve modules, we use the same ResGCN backbone as in GNP~\cite{chen2025graph}, stacking 8 layers with input/output dimension 16.
We use 2-layer MLPs with hidden dimension 32 for feature lifting and projection.
We do not include a post-smoothing stage.
For the bipartite cross-attention module, we use 4 attention heads with hidden dimension 32. We do not apply dropout in any layer.

\subsection{Metrics}
\label{sec:metrics}

We evaluate the performance of preconditioners along two axes: \emph{robustness} and \emph{convergence}.

\paragraph{Robustness}
We report a failure rate accounting for two disjoint failure modes.
\emph{Construction failures} occur when the preconditioner cannot be built, e.g., due to numerical breakdown.
\emph{Solution failures} occur when the Krylov iteration becomes unreliable, as indicated by a significant inconsistency between internally monitored residual norms and explicitly computed residual norms.

\paragraph{Convergence}
Restricted to successful runs, we quantify convergence using two complementary metrics.

\noindent\textbf{(1) Iter-AUC.}
Following prior work~\cite{chen2025graph}, 
we report the area under the relative residual curve (on a logarithmic scale) with respect to iteration count:
\begin{equation}
\label{eq:iter_auc}
\mathrm{Iter\mbox{-}AUC}
\;=\;
\sum_{i=0}^{\mathtt{iters}}\Big(\log_{10} r_k - \log_{10}(\mathtt{rtol})\Big),
\quad
r_k \;=\; \frac{\|b-Ax_k\|_2}{\|b\|_2},
\end{equation}
where $\mathtt{iters}$ denotes the number of iterations actually executed when FGMRES terminates (capped by the iteration budget).
Iter-AUC summarizes the \emph{entire} convergence trajectory 
and remains informative for difficult cases 
that make substantial progress 
but do not reach \(\texttt{rtol}\) within the iteration budget.

\noindent\textbf{(2) Iteration count to tolerance.}
Because Iter-AUC is an aggregate trajectory-level metric, 
we additionally report the smallest number of iterations that reaches the target tolerance:
\begin{equation}
\label{eq:kstar}
k^\star
\;=\;
\min \left\{ k : \|r_k\|_2 < \mathtt{rtol} \; \mbox{and} \; k < \mathtt{imax} 
\right\}.
\end{equation}

Iter-AUC captures \emph{global convergence behavior}, 
but it does not explicitly distinguish between (i) whether the solver 
reaches the prescribed tolerance and 
(ii) how quickly it does so.
In particular, distinct convergence profiles can achieve similar 
Iter-AUC values; for example, a method that rapidly reduces the residual but plateaus above \texttt{rtol} may have a comparable Iter-AUC to one that converges more slowly but successfully. For this reason, we report failure rate to assess robustness, use \(k^\star\) to quantify efficiency on successful solves, and employ Iter-AUC 
as a stable trajectory-level summary for both solved and near-solved cases.

\subsection{Main results}

We now address the questions posed at the beginning of this section. 

\subsubsection{Multilevel vs. Single-level}\label{sec:multi-single}
We begin with a comparison between the proposed GMP preconditioner, evaluated in its two-level configuration, and the single-level graph neural preconditioner (GNP)~\cite{chen2025graph}. Table~\ref{tab:gmp-vs-gnp} reports a head-to-head win-rate comparison on the subset of matrices where \emph{both} methods converge, so that the results isolate the convergence behavior rather than the robustness.
GMP achieves a lower Iter-AUC on \(62.1\%\) of the matrices, indicating stronger trajectory-level residual reduction on average.
Moreover, GMP increasingly dominates as the convergence tolerance becomes stricter: 
the win rate increases from \(48.1\%\) at \(\texttt{rtol}=10^{-2}\) to \(63.5\%\) at \(\texttt{rtol}=10^{-5}\), whereas GNP 
achieves a win rate of \(21.0\%\)\footnote{Note: The percentage in Table~\ref{tab:gmp-vs-gnp} and Table~\ref{tab:three_way_winrate} may not sum to 100\% due to rounding.} and \(17.2\%\), respectively.
These results suggest that incorporating a coarse-grid correction yields consistent advantages beyond single-level message passing, particularly when high-accuracy solutions are required. 

\begin{table}[t]
\centering
\caption{Head-to-head win rates (\%) of GMP vs.\ GNP under different convergence metrics.}
\label{tab:gmp-vs-gnp}
\begin{tabular}{l rrrrr}
\toprule
         & Iter-AUC & $10^{-2}$ & $10^{-3}$ & $10^{-4}$ & $10^{-5}$ \\
\midrule
GNP wins & 37.9\%   & 21.0\%    & 23.8\%    & 19.7\%    & 17.2\%    \\
Tie      & ---      & 30.8\%    & 22.8\%    & 19.7\%    & 19.3\%    \\
GMP wins & 62.1\%   & 48.1\%    & 53.4\%    & 60.7\%    & 63.5\%    \\
\bottomrule
\end{tabular}
\end{table}

The comparison above fixes GMP to two levels. We now ask whether deeper hierarchies can help. Selecting the best number of levels requires a separate training for each matrix, which is prohibitively expensive for all the 867 matrices; we therefore conduct this multilevel study on the first 100 non-SPD SuiteSparse matrices (in alphabetical order from the dataset), training GMP with 2 to 5 levels. For each matrix, we select the best number of levels according to the final residual norm. The results are in Table~\ref{tab:multilevel_winrate}.

\begin{table}[t]
\centering
\caption{Win rate of GMP vs GNP by final residual norm (2-level only vs. best level across 2--5 levels).}
\label{tab:multilevel_winrate}
\small
\setlength{\tabcolsep}{5pt}
\begin{tabular}{lcc}
\toprule
\textbf{Tests} & \textbf{2-level} & \textbf{GMP best levels ($\geq$2)} \\
\midrule
All matrices       & 65.6\% & 72.0\% \\
Matrices that AMG reaches $10^{-2}$   & 70.5\% & 77.3\% \\
Matrices that AMG reaches $10^{-4}$   & 74.1\% & 77.8\% \\
Matrices that AMG reaches $10^{-5}$   & 76.0\% & 80.0\% \\
\bottomrule
\end{tabular}
\end{table}

For context, \texttt{pyamg.blackbox} (the reference traditional AMG solver) uses an average of only 2.6 levels on these 100 matrices (53\% use 2, 35\% use 3, 11\% use 4, 1\% use 5), and 2.7 levels on all 867 non-SPD matrices---classical AMG itself rarely finds deep hierarchies beneficial on non-SPD problems. The best GMP level distribution follows the same trend: 2-level is optimal on 71\%, 3-level on 12\%, 4-level on 9\%, and 5-level on 9\%, with an average best depth of 2.5---closely matching AMG's average.

We use AMG convergence as a proxy for hierarchy quality: when AMG converges to a tight tolerance, the underlying hierarchy is reliable, and GMP can effectively exploit it at deeper levels. We draw two conclusions from the results in Table~\ref{tab:multilevel_winrate}:

\noindent(1) Multilevel improves over 2-level. Tuning for the best number of
    levels consistently provides ${\sim}4\%$--$7\%$ improvement in win rate over the 2-level approach (e.g., $65.6\% \rightarrow 72.0\%$ for all the matrices).

\noindent(2) Better hierarchy quality improves GMP. Comparing the rows of Table~\ref{tab:multilevel_winrate}, GMP's win rate increases as the evaluation is restricted to matrices on which AMG can converge to progressively tighter tolerances: from 72.0\% on all matrices to 80.0\% on those where AMG reaches a residual tolerance $10^{-5}$. 
This trend suggests that the primary limitation of deeper GMP on non-SPD matrices is the quality of the underlying multigrid hierarchy rather than instability in the learned components. When the hierarchy itself is ineffective, adding more levels cannot compensate for the deficiency and may in fact degrade performance due to the increased instability of the hierarchy.

\begin{table*}[!t]
\centering
\caption{Failures of preconditioners (count and percentage).}
\label{tab:failures}
\small
\setlength{\tabcolsep}{10pt}
\begin{tabular}{lccccc}
\toprule
 & \textbf{GMP} & \textbf{GNP} & \textbf{ILUT} & \textbf{AMG} & \textbf{Jacobi} \\
\midrule
Construction failure & 54 (6.2\%) & 0 (0.00\%) & 348 (40.14\%) & 62 (7.15\%) & N/A \\
Solution failure     & 2 (0.23\%) & 1 (0.12\%) & 61 (7.04\%)   & 5 (0.58\%)  & 53 (6.11\%) \\
\bottomrule
\end{tabular}
\end{table*}

\begin{table*}[t!]
\centering
\caption{Wall-clock timing breakdown on the 867 non-SPD matrices. Setup: ILUT factorization, AMG hierarchy construction, or GNP/GMP training.
Solve: GMRES solve phase (all iterations).}
\label{tab:timing-867}
\begin{tabular}{lrrrr}
\toprule
Method & Setup Med. (s) & Setup Avg. (s) & Solve Med. (s) & Solve Avg. (s) \\
\midrule
No Precond & ---    & ---    & 0.54 & 0.54  \\
ILUT & 0.04   & 93.79  & 0.26  & 11.26  \\
AMG & 0.18   & 103.22 & 1.31 & 50.87 \\
GNP & 37.18  & 55.25  & 2.20 & 2.20  \\
GMP & 135.77 & 210.90 & 8.08 & 9.17  \\
\bottomrule
\end{tabular}
\end{table*}

\subsubsection{Robustness.} We summarize the failure rates in Table~\ref{tab:failures}. While GMP and AMG exhibit comparable overall failure rates ($\approx$6--7\%), their failure mechanisms differ fundamentally. AMG failures are primarily numerical, errors indicating the presence of invalid values (e.g., infs or NaNs), despite the safeguards implemented in \path{pyamg.blackbox} to handle zero or near-zero diagonal entries. In contrast, all GMP construction failures (6.2\%) are attributed exclusively to out-of-memory (OOM) constraints rather than algorithmic instability. We perform a detailed breakdown for the OOM occurrence in Appendix \ref{sec:OOM}. Turning to algorithmic stability, this distinction is further highlighted by ILUT, which fails on approximately 40\% of the test problems due to numerical singularities. Finally, GNP exhibits the highest stability with no construction failures and only a single solution failure.

\subsubsection{Timing.} We provide a full timing breakdown on the 867 non-SPD matrices in Tab.~\ref{tab:timing-867}. The reported timing reflects a single fixed hyperparameter configuration (i.e., $B{=}16$, $H{=}4$, $d{=}32$, $L{=}3$, training for 2000 epochs) applied uniformly across all matrices. This configuration is not tuned per matrix and may be larger than necessary for some problems; for a specific application, a smaller model or batch size may suffice and can reduce both training and per-iteration costs. As an illustration, we report a single-matrix memory--accuracy--runtime trade-off study in Appendix \ref{sec:time_reduce}.

\subsubsection{Convergence Efficiency.}
Table~\ref{tab:three_way_winrate} reports three-way win rates on the matrices where all methods produce valid runs.

Iter-AUC measures trajectory-level progress, while the columns labeled \(10^{-2}\) -- \(10^{-5}\)
measure the convergence speed \(k^\star\).

\noindent\textit{GMP vs.\ GNP.} 
Across all three comparison groups (Jacobi, ILUT, and AMG),
GMP consistently achieves higher win rates than GNP under both Iter-AUC and strict tolerance metrics. 
Notably, in the AMG comparison group, 
GMP outperforms GNP on a substantial 
fraction of matrices (e.g., \(27.2\%\) at \(\texttt{rtol}=10^{-5}\)), whereas GNP wins only \(3.7\%\) under the same criterion. This gap indicates that a multilevel coarse-grid correction enables convergence improvements that are rarely 
attained by the single-level GNP baseline.

\noindent\textit{Against classical baselines.} Table~\ref{tab:three_way_winrate} should be interpreted as follows: the win rates are computed \emph{only} on matrices where all compared methods yield valid runs, and
therefore
reflect \emph{convergence efficiency conditional on success}. 
Under this conditioning, (i) {Jacobi} is consistently dominated by GMP (e.g., \(55.4\%\) Iter-AUC wins and \(71.4\%\) wins at \(\texttt{rtol}=10^{-5}\)), highlighting 
the substantial gains that learned preconditioning provides over simple relaxation schemes. (ii) {ILUT}  overwhelmingly outperforms
on the subset where it succeeds, 
which is expected when the factorization is
numerically
stable; however, as shown in Table~\ref{tab:failures}, 
ILUT frequently fails to construct. Thus, 
its strong performance applies only in the conditional regime where it is numerically feasible. (iii) AMG remains a strong general-purpose baseline, yet GMP is competitive, 
achieving a \(22\%\)–\(32\%\) win share across metrics and \(27.2\%\) wins at \(\texttt{rtol}=10^{-5}\). This indicates that learned multilevel correction can accelerate convergence on a substantial subset of systems where the classical AMG method is less effective.

\begin{table}[t]
\centering
\caption{\textbf{Three-way win rate (\%) among valid runs.}
The numbers are the percentage of matrices on which the method attains the best score (ties broken uniformly).}
\label{tab:three_way_winrate}

\begin{tabular}{ll ccccc}
\toprule
& Method & Iter-AUC & $10^{-2}$ & $10^{-3}$ & $10^{-4}$ & $10^{-5}$ \\
\midrule
\multirow{3}{*}{\shortstack{vs\\Jacobi}}
& Jacobi & 10.6\% & 11.3\% & 11.4\% & 17.9\% & 23.4\% \\
& GNP    & 34.0\% & 12.6\% & 12.5\% &  8.4\% &  5.2\% \\
& GMP    & \textbf{55.4\%} & \textbf{76.2\%} & \textbf{76.1\%} & \textbf{73.7\%} & \textbf{71.4\%} \\
\midrule
\multirow{3}{*}{\shortstack{vs\\ILUT}}
& ILUT    & \textbf{77.3\%} & \textbf{95.3\%} & \textbf{95.3\%} & \textbf{97.7\%} & \textbf{97.3\%} \\
& GNP    &  8.6\% &  0.8\% &  1.4\% &  0.6\% &  0.7\% \\
& GMP    & 14.1\% &  3.9\% &  3.3\% &  1.7\% &  2.0\% \\
\midrule
\multirow{3}{*}{\shortstack{vs\\AMG}}
& AMG    & \textbf{46.1\%} & \textbf{69.5\%} & \textbf{73.6\%} & \textbf{71.1\%} & \textbf{69.1\%} \\
& GNP    & 21.6\% &  6.0\% &  4.0\% &  5.2\% &  3.7\% \\
& GMP    & 32.3\% & 24.6\% & 22.4\% & 23.7\% & 27.2\% \\
\bottomrule
\end{tabular}
\end{table}


\begin{figure*}[!t] 
    \centering 
    \includegraphics[width=0.95\linewidth]{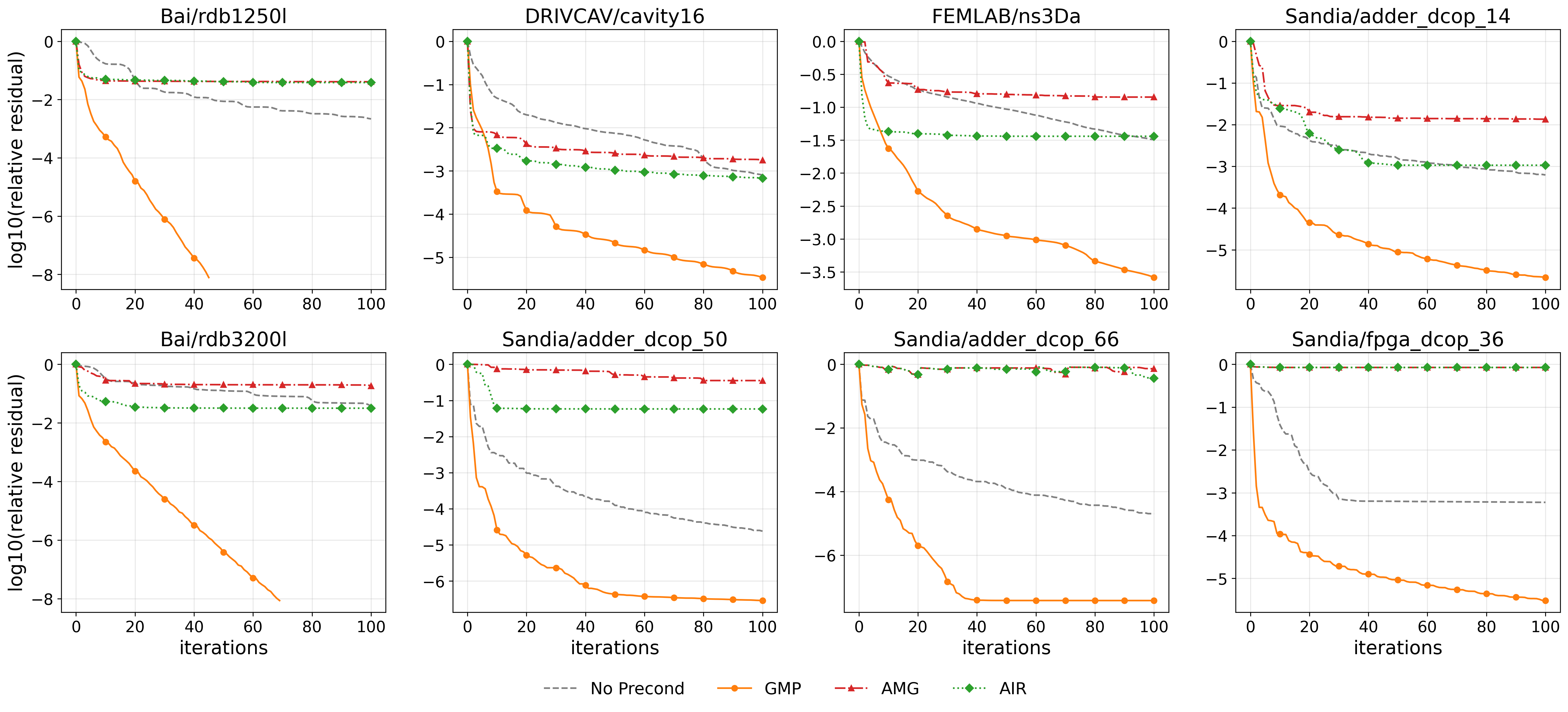} \caption{Convergence trajectories on representative challenging nonsymmetric systems. Log$_{10}$ relative residual versus FGMRES iteration.  
    We compare no preconditioner, AMG, AIR, and the proposed GMP.}
    \Description{A multi-panel set of line plots showing convergence trajectories on representative challenging nonsymmetric systems. Each panel plots the base-ten logarithm of the relative residual on the vertical axis against FGMRES iteration count on the horizontal axis, comparing four methods: no preconditioner, AMG, AIR, and the proposed GMP. The GMP curves generally show sustained residual reduction, often reaching lower residual levels than AMG and AIR.}
 \label{fig:case_study} 
\end{figure*}

\subsection{Case Study on Challenging Matrices}
\label{sec:case_study}

In this section, we analyze representative scenarios where GMP outperforms classical AMG to identify problem domains in which learning-based preconditioning 
offers a distinct advantage. 
Among these instances, we identify 28 different categories, the most prevalent of which arise from circuit simulation, optimization problems, and computational fluid dynamics (CFD). 
These matrices exhibit extreme numerical variability, with condition numbers ranging from $2.71$ to $1.05 \times 10^{65}$, and matrix norms spanning from $2.44 \times 10^{-10}$ to $1.00 \times 10^{20}$.

Notably, in 8.9\% of these cases, GMP achieves a \emph{strong win}, defined as successfully converging to a tolerance of $10^{-6}$ while the classical AMG method stagnates at an early stage.
To further investigate this regime, we selected eight representative matrices for detailed analysis. 
For the experiments with these matrices, we additionally 
include the AIR method obtained from \path{pyamg.classical.air_solver} as a baseline, since it is a specialized nonsymmetric AMG method.
As shown in Fig.~\ref{fig:case_study}, FGMRES preconditioned with classical AMG and AIR fail to achieve substantial residual reduction, performing no 
better than non-preconditioned FGMRES. Conversely,
the GMP preconditioner yields significantly faster convergence and attains a 
remarkably greater reduction in the residual norm.
This behavior indicates that the learning-based multilevel approach can be more 
effective than classical AMG in the nonsymmetric regime. 
In such problems, fundamental assumptions underlying classical AMG are often 
violated. In contrast, GMP does not rely on fixed algebraic heuristics; instead, 
it learns directly from data, allowing it to adapt to highly nonsymmetric cases. 
Moreover, the multilevel structure of GMP combines local message-passing-based 
relaxation with learned coarse-grid correction, 
enabling it to capture global error components that are poorly represented by 
classical AMG heuristics. Together, these factors help explain why GMP is able to achieve robust residual reduction in cases where both classical AMG and AIR struggle.

For a balanced comparison, we also include in Appendix \ref{sec:case_study_amg_app} several examples in which AMG and AIR outperform GMP.

\section{Conclusion}
In this work, we present \emph{Graph Neural Multilevel Preconditioner} (GMP), a residual-conditioned learned preconditioner that lifts classical AMG into a data-driven setting by (i) reusing an AMG-generated hierarchy as a \emph{structural prior} and (ii) learning multilevel operators end-to-end. A key ingredient is parameterizing restriction and interpolation with \emph{bipartite cross-attention} and allowing \emph{separate} parameterizations for the two transfer directions, enabling Petrov--Galerkin-style coarse operators and making the framework naturally applicable to non-SPD and nonsymmetric systems.

\paragraph{Future work.}
Our current implementation incurs, on average, approximately \(4\times\) higher per-application cost than the single-level GNP due to multilevel traversal and bipartite attention. Exploring \textit{cross-matrix generalization} for graph-neural preconditioners on general sparse systems is therefore a particularly meaningful direction: if a single model could generalize across matrices (or across matrix families), the training cost would be amortized over many systems rather than paid per matrix, turning learned preconditioning from a per-instance procedure into a reusable component.

\section{Limitations and Ethical Considerations}
This work studies learned preconditioners and evaluates them on publicly available numerical benchmarks. Consequently, common concerns such as demographic bias and privacy leakage are not directly applicable. The primary impact of this work is computational: improving solver efficiency can reduce runtime and energy consumption in scientific and engineering workloads.

\section{Use of GenAI}
We employed Claude 4.5 Opus to assist with code implementation, and ChatGPT-5.2 to assist with paper revision. 

\begin{acks}
    This work was performed under the auspices of the U.S. Department of Energy by Lawrence Livermore National Laboratory under Contract DE-AC52-07NA27344 (LLNL-PROC-2020426) and was supported by the LLNL-LDRD program under Project No. 24-ERD-033.
\end{acks}

\bibliographystyle{ACM-Reference-Format}
\balance
\bibliography{reference_numerical, reference_ml}

\appendix

\section{Appendix}
\subsection{Theorem}
\label{sec:theorem_a1}
\begin{theorem}[Stability of Attention-Based Interpolation]
Let $P \in \mathbb{R}^{n \times n_c}$ be the interpolation operator with the
interpolation weights given by
\begin{equation}
P_{ij} = \alpha_{ij}, \quad i \in \mathcal{V}_F, \; j \in C_i \ ,    
\end{equation}
where $\alpha_{ij} \ge 0$ and $\sum_{j \in C_i} \alpha_{ij} = 1$, as produced by a normalized (softmax) attention mechanism. 
For coarse points $i \in \mathcal{V}_C$, let $P_{ii} = 1$ and $P_{ij} = 0$ for $j \neq i$.
Then the following properties hold:
\begin{enumerate}
    \item  $\|P\|_{\infty} = 1$;
    \item  For any coarse vector $v_c$, $\|Pv_c\|_{\infty} \le \|v_c\|_{\infty}$;
    \item  For any constant coarse-grid vector $v_c = c\mathbf{1}_c$, the interpolated vector satisfies $Pv_c = c\mathbf{1}_f$.
\end{enumerate}
\end{theorem}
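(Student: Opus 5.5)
The plan is to reduce everything to a single row-wise observation. Every row of $P$ has nonnegative entries summing to one. For a fine point $i\in\mathcal{V}_F$ this follows directly from the hypotheses $\alpha_{ij}\ge 0$ and $\sum_{j\in C_i}\alpha_{ij}=1$; we also use the convention that $P_{ij}=0$ for $j\notin C_i$, so the only nonzeros in row $i$ are the $\alpha_{ij}$. For a coarse point $i\in\mathcal{V}_C$ the row is a unit vector (identifying the coarse point with its coarse index), whose single entry $1$ is nonnegative and sums to one. Once this ``row-stochastic'' property is recorded, all three claims follow from standard facts about the $\infty$-norm.

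For (1), we use the characterization $\|P\|_\infty=\max_i\sum_j|P_{ij}|$ as the maximum absolute row sum. Nonnegativity gives $|P_{ij}|=P_{ij}$, so every absolute row sum equals the ordinary row sum, which is $1$. Hence the maximum is exactly $1$. Equality, not just an upper bound, holds provided $P$ has at least one row; this is automatic since $n\ge 1$.

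For (2), we bound $Pv_c$ componentwise:
\[
|(Pv_c)_i|\le \sum_j P_{ij}|(v_c)_j|\le \|v_c\|_\infty\sum_j P_{ij}=\|v_c\|_\infty .
\]
Taking the maximum over $i$ gives the claim. Alternatively, it follows immediately from (1) via $\|Pv_c\|_\infty\le\|P\|_\infty\|v_c\|_\infty$.

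For (3), we compute $(P\,c\mathbf{1}_c)_i=c\sum_j P_{ij}=c$ for every $i$, which gives $c\mathbf{1}_f$. Here $\mathbf{1}_f$ is read as the all-ones vector on the full fine grid $\mathcal{V}=\mathcal{V}_F\cup\mathcal{V}_C$.

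There is no real obstacle. The only points needing care are bookkeeping: stating explicitly that $P_{ij}=0$ outside $C_i$, so that the softmax normalization over $C_i$ really is the full row sum; noting that $C_i$ is nonempty so the softmax is well defined; and matching coarse points to coarse indices in the injection rows. I would state these conventions at the start of the proof and then dispatch (1)--(3) in a few lines each.
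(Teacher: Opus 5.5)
Your proposal is correct and follows essentially the paper's argument: both rest on every row of $P$ having nonnegative entries summing to one, via the softmax weights on fine rows and the identity rows on coarse points. The only difference is the order: the paper proves (2) componentwise first, deduces $\|P\|_\infty \le 1$ from it, and gets equality from the coarse identity rows, whereas you read (1) off the maximum-absolute-row-sum formula and obtain (2) as a corollary.
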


\begin{proof}
For any point $i \in \mathcal{V}_F$ and coarse-grid vector $v_c$, we have
\[
|(Pv_c)_i| = \left| \sum_{j \in C_i} \alpha_{ij} (v_{c})_j \right|
\le \sum_{j \in C_i} \alpha_{ij} |(v_{c})_j|
\le \|v_c\|_{\infty} \sum_{j \in C_i} \alpha_{ij}
= \|v_c\|_{\infty}.
\]
For coarse points $i \in \mathcal{V}_C$, $(Pv_c)_i = (v_{c})_i$ by definition. Hence,
$\|Pv_c\|_{\infty} \le \|v_c\|_{\infty}$,
which implies $\|P\|_{\infty} \le 1$. Equality follows since $P$ contains identity rows on coarse points.

For constant vectors $v_c = c\mathbf{1}_c$, we have
\[
(Pv_c)_i = \sum_{j \in C_i} \alpha_{ij} c = c
\quad \text{for all } i \in \mathcal{V}_F,
\]
and trivially $(Pv_c)_i = c$ for $i \in \mathcal{V}_C$. This proves constant reproduction.
\end{proof}
In energy minimization approaches \cite{10.1007/978-3-540-34469-8_2, doi:10.1137/100803031} for AMG interpolation,
the energy-stability is typically controlled by minimizing
$\operatorname{Tr}(P^\top A P)$, or equivalently minimizing each column $p$ 
of $P$ in the $A$-norm.
For diffusion-type and $M$-matrix operators, this 
admits the expression
\begin{equation}
p^\top A p \sim \sum_{i<j} |a_{ij}| (p_i - p_j)^2,
\end{equation}
which promotes coarse basis functions that 
vary slowly across strongly connected graph edges $|a_{ij}|$. 
However, this 
criterion alone does not control the magnitude of the interpolation 
weights and permits large but slowly varying values, which can lead 
to unstable coarse-to-fine transfer. The row-stochastic constraint 
enforced by the attention-based construction provides a complementary 
stability mechanism  in the $\ell_{\infty}$ norm.

\subsection{Results on PyAMG AIR}
\label{sec:air_results}

Compared with classical \texttt{pyamg.\allowbreak aggregation.\allowbreak smoothed\allowbreak\_aggregation\allowbreak\_solver} (SA), the AIR method~\cite{manteuffel2018nonsymmetric} is a preferable choice for non-SPD matrices. However, due to long construction time on specific matrices (e.g., from groups GHS\_indef, Schenk\_IBMNA, Rajat), which can take hours to process a single instance, we report results for only the first 647 out of 867 matrices here. 

\begin{table}[H]
\centering
\caption{Head-to-head win rates (\%) of GMP vs.\ AIR under different convergence metrics.}
\label{tab:gmp-vs-air}
\begin{tabular}{ll rrrr}
\toprule
         & Method & Iter-AUC & $10^{-2}$ & $10^{-3}$ & $10^{-4}$ \\
\midrule
vs       & AIR    & 38.6\%   & 70.8\%    & 79.2\%    & 86.1\%    \\
AIR      & GNP    & 25.6\%   & 4.5\%     & 5.7\%     & 1.4\%     \\
         & GMP    & 35.9\%   & 24.7\%    & 15.1\%    & 12.5\%    \\
\bottomrule
\end{tabular}
\end{table}

\subsection{GMP Failure Analysis}
\label{sec:OOM}

We note that the experiments are conducted on a shared GPU cluster, where dedicated use of a 40GB node is not guaranteed. When we re-ran all the cases that had failed due to OOM on a dedicated 40GB A100, the failure rate dropped significantly, as shown in Tab.~\ref{tab:new_failures}. The two construction failures are one from the classical Ruge–Stüben coarsening selecting no coarse points, yielding an empty interpolation operator $P$, and one from OOM. We point out that GMP achieves the same optimal linear asymptotic scaling with respect to the interpolation operator, $O(\mathrm{nnz}(P))$, as classical AMG; the increased memory cost reflects the large constant factors from the network architecture. Specifically, GMP's memory cost is dominated by the cross-attention mechanism, scaling as $O(\mathrm{nnz}(P) \times B \times H \times d \times L)$, alongside the ResGCN smoother cost of $O(n \times B \times e \times L_{\mathrm{GCN}})$ (where $B=16$ is the batch size, $H=4$ is the number of attention heads, $d=32$ is the attention dimension, $L=3$ is the number of attention layers, $e=16$ is the GCN embedding dimension, and $L_{\mathrm{GCN}}=8$ is the number of GCN layers). While the asymptotic $O(\mathrm{nnz}(P))$ complexity matches classical AMG's scalar operations, GMP carries a much larger constant factor of $B \times H \times d \times L = 6{,}144$ due to batched multi-head attention. We note that $B$, $H$, $d$, and $L$ are treated as fixed hyperparameters and used uniformly across all 867 matrices without per-problem tuning. For a given problem, a smaller configuration may suffice. An illustrative example can be found in Appendix \ref{sec:time_reduce}.

\begin{table}[H]
\centering
\caption{Failures of preconditioners (count and percentage).}
\label{tab:new_failures}
\small
\setlength{\tabcolsep}{10pt}
\begin{tabular}{lcc}
\toprule
 & \textbf{GMP} & \textbf{GNP} \\
\midrule
Construction failure & 2 (0.2\%) & 0 (0.00\%) \\
Solution failure     & 2 (0.23\%) & 1 (0.12\%)  \\
\bottomrule
\end{tabular}
\end{table}

\subsection{Cross-Matrix Generalization}
\label{sec:cross_matrix}

The per-matrix training protocol used in our main experiments (Sec.~\ref{sec:gmp_setup}) is motivated by the fact that the selected  matrices from SuiteSparse
constitute a collection of diverse sparse linear systems
that cannot be assumed to share a common distribution. When such an
assumption \emph{does} hold---e.g., when matrices arise from the same PDE problem with varying parameters---cross-matrix training is not only feasible but beneficial.

To demonstrate this, we generated 100 anisotropic diffusion matrices on a $32\times32$ grid (varying the anisotropy ratio and rotation angle), split $80/20$ for train/test, and trained a \emph{single} GMP model on the 80 training matrices. At test time, we evaluate on 20 unseen matrices at each of three grid sizes---$32\times32$, $48\times48$, and $64\times64$---by building the AMG hierarchy for each matrix and applying the shared learned weights without any further training. Results are reported in Tab.~\ref{tab:cross_matrix}.

\begin{table}[H]
\centering
\caption{Generalization on anisotropic diffusion matrices (median GMRES iterations, $\mathtt{rtol}=10^{-8}$). Per-matrix methods are trained from scratch at each grid size. Cross-matrix GMP is trained on 80 matrices ($32\times32$) and applied to all sizes without retraining.}
\label{tab:cross_matrix}
\small
\setlength{\tabcolsep}{6pt}
\begin{tabular}{llccc}
\toprule
\textbf{Method} & \textbf{Training} & \textbf{32$\times$32} & \textbf{48$\times$48} & \textbf{64$\times$64} \\
\midrule
GNP & per-matrix   & 74 & 133 & 205 \\
GMP & per-matrix   & 40 & 66  & 97  \\
\textbf{GMP} & \textbf{cross-matrix} & \textbf{24} & \textbf{44} & \textbf{60} \\
\bottomrule
\end{tabular}
\end{table}

The cross-matrix GMP \emph{outperforms} the per-matrix baselines at all grid sizes---even on $48\times48$ and $64\times64$, where the baselines were trained at that size. This demonstrates two points: (1) cross-matrix training produces a more robust preconditioner; and (2) the learned operators generalize to unseen grid sizes, enabling a ``train small, apply large'' deployment.

\subsection{Memory--Accuracy--Runtime Trade-off}
\label{sec:time_reduce}

The default GMP configuration used throughout the main experiments is fixed and applied uniformly to all matrices, and is therefore not tuned to any individual problem. To examine how far the configuration can be reduced on a single problem, we conduct a hyperparameter study on \texttt{Bai/rdb1250l}, a 
non-SPD matrix 
randomly selected from the dataset,
where $n{=}1{,}250$ and $\mathrm{nnz}{=}7{,}300$, 
to explore the memory--accuracy--runtime trade-off. We vary the batch size $B$, the number of attention heads $H$, the attention dimension $d$, the number of attention layers $L$, and the number of epochs for each configuration. Peak GPU memory is measured via \texttt{torch.cuda.max\_memory\_allocated()}. 
The results are reported in Tab.~\ref{tab:time_reduce}.

\begin{table*}[t]
\centering
\caption{Memory--accuracy--runtime trade-off on \texttt{Bai/rdb1250l} ($n{=}1{,}250$, $\mathrm{nnz}{=}7{,}300$). ``Iters'' and ``Res'' are the GMRES iterations and final relative residual; ``Peak Mem'' is the peak GPU memory.}
\label{tab:time_reduce}
\small
\setlength{\tabcolsep}{4pt}
\begin{tabular}{lccccccl}
\toprule
\textbf{Config} & \textbf{Epochs} & \textbf{Iters} & \textbf{Res} & \textbf{Train} & \textbf{Solve} & \textbf{Peak Mem} & \textbf{vs Default} \\
\midrule
Default ($B{=}16, H{=}4, d{=}32, L{=}3$) & 2{,}000 & 300 & 1.60e-2 & 87.7s & 7.6s & 304M & --- \\
$B{=}1, H{=}4, d{=}32, L{=}3$            & 4{,}000 & \textbf{91}  & 9.02e-9 & 161.3s & 2.3s & 36M & 8.4$\times$ less mem \\
$B{=}1, H{=}1, d{=}16, L{=}1$            & 2{,}000 & \textbf{166} & 9.53e-9 & 58.9s  & 3.0s & 25M & 12.2$\times$ less mem, 1.5$\times$ faster train \\
\bottomrule
\end{tabular}
\end{table*}

\begin{figure*}[!t]
    \centering
    \includegraphics[width=0.95\linewidth]{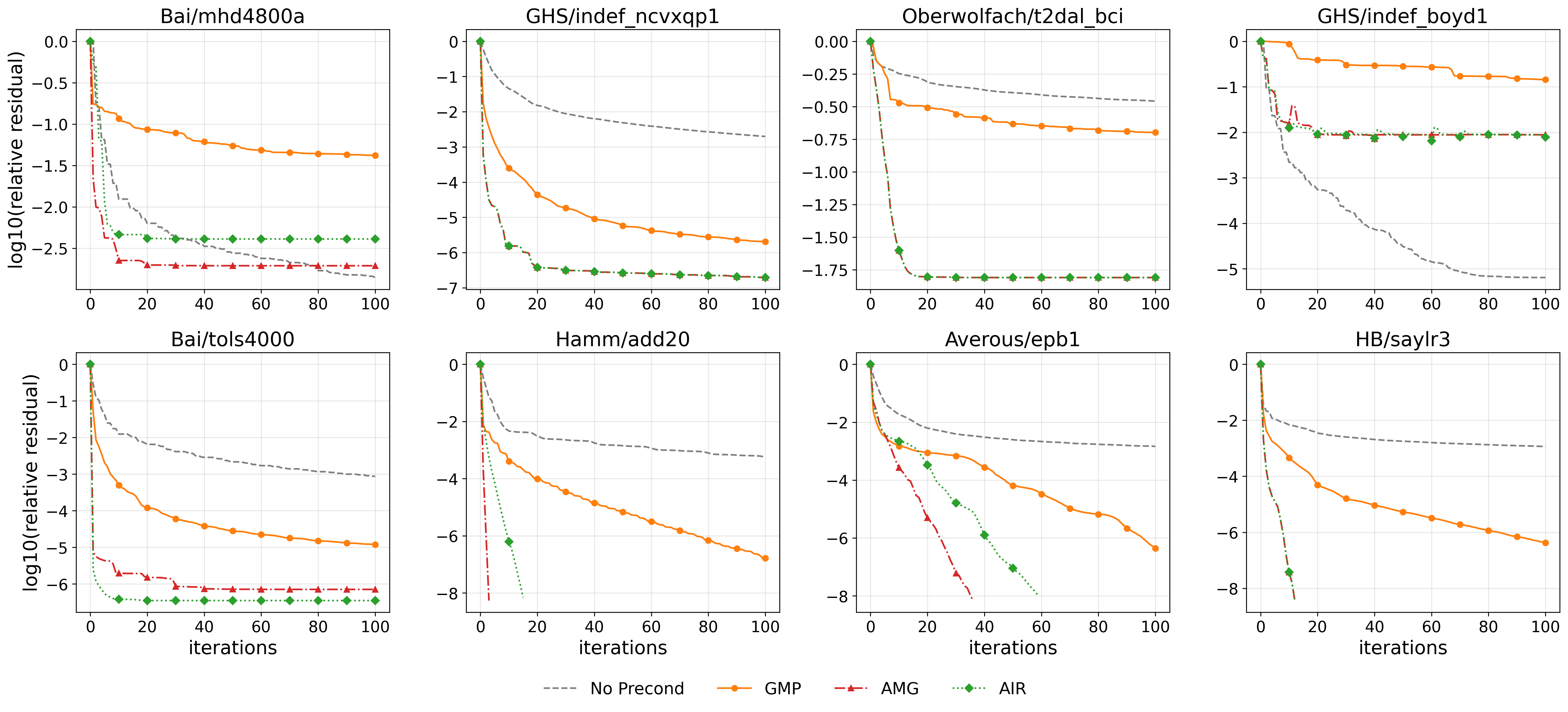}
    \caption{Convergence trajectories on representative nonsymmetric systems where the classical preconditioners outperform GMP. Log$_{10}$ relative residual versus FGMRES iteration, comparing no preconditioner, AMG, AIR, and the proposed GMP.}
    \Description{A multi-panel set of line plots showing convergence trajectories on representative nonsymmetric systems where classical preconditioners outperform GMP. Each panel plots the base-ten logarithm of the relative residual on the vertical axis against FGMRES iteration count on the horizontal axis, comparing four methods: no preconditioner, AMG, AIR, and the proposed GMP. In these cases the AMG and AIR curves reach lower residual levels than GMP.}
    \label{fig:case_study_amg}
\end{figure*}

Key observations are as follows:
\begin{enumerate}
    \item Reducing $B$ from 16 to 1 while keeping the model architecture ($H{=}4$, $d{=}32$, $L{=}3$) yields an $8.4\times$ memory reduction and the best convergence on this matrix (91 iterations to a relative residual of $10^{-8}$), whereas the default configuration does not reach this tolerance within the iteration budget.
    \item An aggressive reduction ($B{=}1$, $H{=}1$, $d{=}16$, $L{=}1$) achieves a $12.2\times$ memory reduction, converges in 166 iterations, and trains $1.5\times$ faster than the default.
    \item For this matrix, both reduced configurations produce better preconditioners than the default, suggesting that the default hyperparameters are larger than necessary here. We emphasize that this is a single-matrix study: it does not establish that such reductions are uniformly beneficial or free of accuracy cost, and the best configuration is problem-dependent. Nonetheless, peak memory is dominated by the cross-attention term, so reducing these hyperparameters lowers memory usage; and since this term scales with $\mathrm{nnz}(P)$, we expect the savings to grow on larger matrices.
\end{enumerate}

\subsection{Case Study Additional Results}
\label{sec:case_study_amg_app}
In Sec.~\ref{sec:case_study}, we presented cases in which GMP outperforms the classical preconditioners. For a balanced comparison, we additionally report cases in which the classical preconditioners AMG and AIR outperform GMP, shown in Fig.~\ref{fig:case_study_amg}.

\end{document}